\documentclass{amsart}

\usepackage{etex}
\usepackage{amsmath, amssymb}
\usepackage{array}
\usepackage{graphpap, color, paralist, pstricks}
\usepackage[mathscr]{eucal}
\usepackage[pdftex]{graphicx}
\usepackage[pdftex,colorlinks,backref=page,citecolor=blue]{hyperref}
\usepackage{pifont}

\setlength{\oddsidemargin}{0in}
\setlength{\evensidemargin}{0in}
\setlength{\marginparwidth}{0in}
\setlength{\marginparsep}{0in}
\setlength{\marginparpush}{0in}
\setlength{\topmargin}{0in}
\setlength{\headsep}{8pt}
\setlength{\footskip}{.3in}
\setlength{\textheight}{9in}
\setlength{\textwidth}{6.5in}
\setlength{\parskip}{4pt}
\linespread{1.3}

\newtheorem{thm}{Theorem}[section]
\newtheorem{prop}[thm]{Proposition}

\newtheorem{lem}[thm]{Lemma}

\theoremstyle{definition}

\newtheorem{remark}[thm]{Remark}

\newcommand{\gO}{\Omega}

\newcommand{\cO}{\mathcal{O} }

\newcommand{\cB}{\mathcal{B} }

\newcommand{\cE}{\mathcal{E} }

\newcommand{\cG}{\mathcal{G} }

\newcommand{\cW}{\mathcal{W} }

\newcommand{\beq}[1]{\begin{equation}\label{#1}}
\newcommand{\enq}[0]{\end{equation}}

\newcommand{\nin}[0]{\noindent}

\newcommand{\sub}[0]{\subseteq}

\newcommand{\bis}[0]{\mbox{\rm{bis}}}

\newcommand{\aaa}{q}

\begin{document}

\title{Note on the number of balanced independent sets in the Hamming cube}

\author{Jinyoung Park}
\thanks{The author is supported directly by NSF
grant DMS-1926686
and indirectly by NSF grant CCF-1900460.}
\email{jpark@math.ias.edu}
\address{School of Mathematics, Institute for Advanced Study \\
1 Einstein Drive, Princeton, NJ 08540, USA}

\begin{abstract}
Let $Q_d$ be the $d$-dimensional Hamming cube and $N=|V(Q_d)|=2^d$. An independent set $I$ in $Q_d$ is called balanced if $I$ contains the same number of even and odd vertices. We show that the logarithm of the number of balanced independent sets in $Q_d$ is
\[(1-\Theta(1/\sqrt d))N/2.\]
The key ingredient of the proof is an improved version of ``Sapozhenko's graph container lemma.''
\end{abstract}

\maketitle
%%%%%%%%%%%%%%%%%%%%%%%%%%%%%%%%%%%

\section{Introduction}
For a bipartite graph $G=X \coprod Y$ and an independent set $I$ in $G$, $I$ is said to be \textit{balanced} if $|I \cap X|=|I \cap Y|$. We use $\bis(G)$ for the number of balanced independent sets (BIS's) of a graph $G$.

Write $Q_d$ for the $d$-dimensional Hamming cube and $N$ for $|V(Q_d)|(=2^d)$. In this note we prove the following result on $\log \bis(Q_d)$. (All $\log$'s in this paper are in base 2.)

\begin{thm}\label{MT}
\beq{eq.MT} \log \bis(Q_d) =(1-\Theta(1/\sqrt d))N/2. \enq
\end{thm}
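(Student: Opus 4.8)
The plan is to exploit the exact identity obtained by classifying a balanced independent set $I$ according to its odd part $S:=I\cap\cO$: the even part $I\cap\cE$ must lie in $\cE\sm N(S)$ and have cardinality $|S|$, so
\[\bis(Q_d)\;=\;\sum_{S\sub\cO}\binom{2^{d-1}-|N(S)|}{|S|}.\]
Since the bipartite graph between $\cE$ and $\cO$ is $d$-regular, $|N(S)|\ge|S|$, so the summand vanishes unless $|S|\le 2^{d-2}=N/4$. More importantly, the vertex-isoperimetric inequality for $Q_d$ (Hamming balls are extremal) gives $|N(S)|-|S|=\gO(N/\sqrt d)$ whenever $|S|$ is a constant fraction of $N$; this ``isoperimetric deficit'' of order $N/\sqrt d$ is exactly what produces the $\Theta(1/\sqrt d)$ term, from both directions.

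For the lower bound, let $S_0$ be the set of odd-weight vertices of Hamming weight at most $r$, with $r$ equal to $d/2$ rounded to an appropriate parity (any $r=d/2-O(d^{1/4})$ works too). Then $N(S_0)$ is the set of even-weight vertices of weight at most $r+1$, and the normal approximation to partial binomial sums gives $|S_0|=(1-O(1/\sqrt d))\,2^{d-2}$ and $|N(S_0)|-|S_0|=O(N/\sqrt d)$. For every $S\sub S_0$ and every $T\sub\cE\sm N(S_0)$ with $|S|=|T|$, the set $S\cup T$ is a balanced independent set, and distinct pairs $(S,T)$ give distinct sets, so Vandermonde's identity yields
\[\bis(Q_d)\;\ge\;\sum_{k\ge0}\binom{|S_0|}{k}\binom{|\cE\sm N(S_0)|}{k}\;=\;\binom{\,N/2-(|N(S_0)|-|S_0|)\,}{|S_0|}.\]
As $|S_0|\approx N/4$ and the deficit is $O(N/\sqrt d)$, estimating this near-central binomial coefficient gives $\log\bis(Q_d)\ge N/2-O(N/\sqrt d)$.

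For the upper bound, group the sum by $(m,b)=(|S|,|N(S)|)$: writing $M(m,b)=\#\{S\sub\cO:|S|=m,\ |N(S)|=b\}$, we have $\bis(Q_d)\le\sum_{m,b}M(m,b)\binom{2^{d-1}-b}{m}$, the summand vanishing unless $b\le N/2-m$. Very small $m$ (say $m\le N/100$) contributes at most $\sum_m\binom{N/2}{m}^2\le 2^{0.3N}$, negligible. For $m$ in the main range one bounds $M(m,b)$ using the improved container lemma together with isoperimetry: every $S$ lies in its closure $[S]:=\{v\in\cO:N(v)\sub N(S)\}$, which has $N([S])=N(S)$, so applying the isoperimetric inequality to $[S]$ gives $|[S]|\le(1-\gO(1/\sqrt d))\,b$; the number of closures whose neighbourhood has size $b$ is at most $2^{o(N/\sqrt d)}$ by the improved lemma; and each such closure has at most $\binom{(1-\gO(1/\sqrt d))b}{m}$ subsets of size $m$. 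Hence $M(m,b)\le 2^{o(N/\sqrt d)}\binom{(1-\gO(1/\sqrt d))b}{m}$, and the sum is dominated by the regime $m\approx N/8$, $b\approx N/4$, where the two binomial factors have sizes $2^{N/4-\gO(N/\sqrt d)}$ and $2^{N/4}$ — the first saving being precisely the isoperimetric deficit. This gives $\bis(Q_d)\le 2^{N/2-\gO(N/\sqrt d)}$.

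The main obstacle is the improved container lemma. Sapozhenko's classical container construction for $Q_d$ loses polylogarithmic factors in the exponent — it bounds the number of relevant closures only by $2^{O(N(\log d)^{O(1)}/\sqrt d)}$ — and since $(\log d)^{O(1)}/\sqrt d\gg 1/\sqrt d$, such a bound swamps the isoperimetric gain $\gO(N/\sqrt d)$ and is useless here. Removing these logarithmic factors, so that the number of containers is $2^{o(N/\sqrt d)}$ with an implied constant one can afford, is the technical crux; granted that, the upper bound is the elementary (but slightly delicate) binomial-coefficient optimization over $(m,b)$ sketched above, matched against the cube's isoperimetric profile.
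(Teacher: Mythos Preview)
Your overall strategy matches the paper's: isoperimetry supplies a $\Theta(N/\sqrt d)$ deficit on both sides, and the upper bound hinges on an improved Sapozhenko-type lemma. Your lower bound via Hamming balls and Vandermonde is essentially Barber's construction repackaged, and is fine.

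The genuine gap is in how you invoke the container lemma. You assert that the improved lemma bounds ``the number of closures whose neighbourhood has size $b$'' by $2^{o(N/\sqrt d)}$, and you describe the improvement over Sapozhenko as removing polylog factors from the \emph{container count}. Both statements miss the point. The classical approximation step (Lemma~\ref{lem:GS}) already produces only $2^{O(t\log^2 d/\sqrt d)}$ containers, which for $t=\Theta(N/\sqrt d)$ is $2^{O(N\log^2 d/d)}=2^{o(N/\sqrt d)}$ --- no improvement is needed there. What Lemma~\ref{main lemma} actually sharpens is the \emph{reconstruction}: for $2$-linked $A$ with $|[A]|=q$ and $|N(A)|=g$ it gives $\log|\cG(q,g)|\le g-\Omega(t)$ (with $t=g-q$), improving Sapozhenko's $g-\Omega(t/\log d)$. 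The paper then applies this directly --- not via a separate closure count --- bounding $|\{A:|N(A)|=g\}|$ by $2^{g-\Omega(N/\sqrt d)}$ after decomposing $A$ into $2$-components and using isoperimetry to get $\sum_{i\text{ large}} t_i=\Omega(N/\sqrt d)$.

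Your route through closures would at best extract from the proof of Lemma~\ref{main lemma} that there are $2^{O(t)}$ closures per $(q,g)$. Since isoperimetry only forces $t\ge\Omega(b/\sqrt d)$, in the dominant regime $t=\Theta(N/\sqrt d)$ and your two-step count becomes a constant-versus-constant fight at scale $N/\sqrt d$: the closure count contributes $+Ct$ in the exponent, the shrinkage $|[S]|\le b-t$ saves $t$, and you win only if $C<1$. That is precisely the content of the lemma (the choice of $\gamma$ with $\gamma+H(\gamma)<1$ in its proof), so your ``$2^{o(N/\sqrt d)}$'' hides exactly the step that needs proving. You are also missing the $2$-component decomposition (Lemma~\ref{main lemma} applies only to $2$-linked sets) and the symmetry step ensuring $|[S]|\le N/4$ so that Proposition~\ref{prop:isop} applies; your observation $|S|\le N/4$ does not imply this.
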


\nin It is easy to see that the rhs of \eqref{eq.MT} is a lower bound: Barber \cite{Barber} showed that the size of a maximum BIS in $Q_d$ is

\[\begin{cases}
\displaystyle 2^{d-1}-2{d-2 \choose (d-2)/2} &\mbox{if $d$ is even;}\bigskip\\
\displaystyle 2^{d-1}-{d-1 \choose (d-1)/2} &\mbox{if $d$ is odd,}
\end{cases}\]
and collecting balanced subsets of a maximum BIS gives the lower bound. So the main task of this paper is to show the rhs of \eqref{eq.MT} is also an upper bound.

\bigskip

\nin \textit{Background.} The asymptotics for the number of (ordinary) independent sets in $Q_d$, $i(Q_d)$, was first {given} by Korshunov and Sapozhenko \cite{KS}:

\begin{thm}\label{i}
\beq{i asymp} i(Q_d)\sim 2\sqrt e 2^{N/2}.\enq
\end{thm}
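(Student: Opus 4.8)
The plan is to prove matching bounds $i(Q_d)=(2\sqrt e+o(1))\,2^{N/2}$, the lower one routine and the upper one the real work. Write $\mathcal E,\mathcal O$ for the two sides of $Q_d$, each of size $N/2=2^{d-1}$, and $N(B)$ for the neighbourhood of $B\subseteq\mathcal O$. Since an independent set $I$ is exactly the union of $B:=I\cap\mathcal O$ with an arbitrary subset of $\mathcal E\setminus N(B)$, one has the exact identity
\[
 i(Q_d)=\sum_{B\subseteq\mathcal O}2^{\,N/2-|N(B)|},
\]
and I would estimate this by splitting on the size of $|N(B)|$: fix $\kappa=d^{2}$ and put $\mathcal B_{\mathrm{sm}}=\{B:|N(B)|\le\kappa\}$, $\mathcal B_{\mathrm{lg}}=\{B:|N(B)|\ge N/2-\kappa\}$, and let $\mathcal B_{\mathrm{mid}}$ be the rest. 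The point of the split is that on $\mathcal B_{\mathrm{sm}}$ and $\mathcal B_{\mathrm{lg}}$ the sum must be computed to relative error $o(1)$ to obtain the constant $2\sqrt e$ — passing to closures there would cost a factor $\sqrt e$ and only give $2e$ — whereas on $\mathcal B_{\mathrm{mid}}$ it suffices to show the sum is $o(2^{N/2})$.

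Call $B$ \emph{$2$-spread} if no two of its vertices are at distance $2$, equivalently $|N(B)|=d|B|$. On $\mathcal B_{\mathrm{sm}}$, using $\binom{N/2}{b}2^{-db}\sim(1/2)^{b}/b!$ (from $(N/2)2^{-d}=\tfrac12$), the $2$-spread $B$ contribute $2^{N/2}\sum_{b\ge0}(1+o(1))\tfrac{(1/2)^{b}}{b!}=(\sqrt e+o(1))2^{N/2}$, while merging a pair of vertices of $B$ into a $2$-linked cluster loses at most a constant factor in the weight $2^{-|N(B)|}$ but gains only $\Theta(d^{2})/2^{d-1}$ of the count, so non-$2$-spread $B$ here contribute $o(2^{N/2})$; hence $\sum_{\mathcal B_{\mathrm{sm}}}=(\sqrt e+o(1))2^{N/2}$. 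Keeping only the $2$-spread $B$ of bounded size already gives the lower bound $i(Q_d)\ge(\sqrt e-o(1))2^{N/2}$; adjoining the disjoint family of $B$ for which $T:=\mathcal E\setminus N(B)$ is $2$-spread of bounded size gives another $(\sqrt e-o(1))2^{N/2}$, because for such $T$ one has $\#\{B:N(B)=\mathcal E\setminus T\}=(e^{-1/2}+o(1))2^{N/2-d|T|}$ — a uniformly random subset of $\mathcal O\setminus N(T)$ dominates $\mathcal E\setminus T$ with probability $\to e^{-1/2}$, the number of undominated vertices being asymptotically Poisson with mean $\tfrac12$ — so a $2$-spread $T$ of size $t$ contributes $(e^{-1/2}+o(1))2^{N/2}/t!$ (as $\binom{N/2}{t}2^{-(d-1)t}\to1/t!$), for a total of $(e^{-1/2}e-o(1))2^{N/2}=(\sqrt e-o(1))2^{N/2}$. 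Thus $i(Q_d)\ge(2\sqrt e-o(1))2^{N/2}$; the same computation, read as an upper bound (and using that $\mathcal E\setminus N(B)$ is always closed, so only closed $T$ with $|T|\le\kappa$, hence $|N(T)|\le\kappa d$, occur, with no middle-scale $T$), gives $\sum_{\mathcal B_{\mathrm{lg}}}=(\sqrt e+o(1))2^{N/2}$.

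The remaining task is $\sum_{\mathcal B_{\mathrm{mid}}}2^{-|N(B)|}=o(1)$, where I would use Sapozhenko's closure apparatus. Replace $B$ by $F:=[B]=\{u\in\mathcal O:N(u)\subseteq N(B)\}$, which is closed, has $N(F)=N(B)$, and arises from at most $2^{|F|}$ sets $B$; decompose $F$ into its $2$-linked components, over which $2^{|F|-|N(F)|}$ factorizes (distinct components have disjoint neighbourhoods), and sum over configurations of components. A $2$-linked set with neighbourhood size $\psi<\kappa$ is small and, by a direct count of small $2$-linked sets weighed against the decay $2^{-\psi}$, contributes negligibly — a configuration of only small components with $|N(B)|>\kappa$ must involve more than $d$ singleton components or a non-singleton component, both $o(2^{N/2})$ in the sum (the former a tail of $\sum(1/2)^{b}/b!$). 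For a $2$-linked closed $F'$ with $\psi:=|N(F')|\ge\kappa$, Harper's vertex-isoperimetric inequality for $Q_d$ gives $|N(F')|-|F'|\ge c\psi/\sqrt d$ for an absolute constant $c$ (it yields $h(a)-a\ge c'a/\sqrt d$ for all $a\le N/4$; then split on whether $|F'|\ge\psi/2$), and Sapozhenko's graph container lemma bounds the number of such $F'$ through a fixed vertex by $2^{O(\psi\log d/d)}$. Since $\psi\ge\kappa=d^{2}$ absorbs a marked-vertex (and bounded dressing) factor $2^{O(d)}$, and $\log d/d=o(1/\sqrt d)$, summing yields $\sum_{\mathcal B_{\mathrm{mid}}}2^{-|N(B)|}\le 2^{O(d)}\sum_{\psi\ge\kappa}2^{-\Omega(\psi/\sqrt d)}=2^{\,O(d)-\Omega(d^{3/2})}=o(1)$. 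For $|F'|>N/4$, where Harper on $F'$ is too weak, one applies the same estimates to the complementary closed set $S:=\mathcal E\setminus N(F')\subseteq\mathcal E$ — which then has $\kappa<|S|<N/4$ — via the identity $2^{|F'|-|N(F')|}=2^{|S|-|N(S)|}$. Together with the lower bound, this gives $i(Q_d)=(2\sqrt e+o(1))2^{N/2}$.

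I expect the main obstacle to be the middle range, and within it the requirement that the container count and the isoperimetric deficit dovetail — the number of closed $2$-linked containers with $|N(\cdot)|=\psi$ must be only $2^{o(\psi/\sqrt d)}$, which is exactly the strength of Sapozhenko's lemma (and uses $\log d/\sqrt d\to0$) — together with the need to cover the full range of container sizes, in particular those near $N/2$, where the isoperimetric inequality for the set itself degenerates and one must switch to the complement. A secondary difficulty, absent in a mere order-of-magnitude argument, is that the computations on $\mathcal B_{\mathrm{sm}}$ and $\mathcal B_{\mathrm{lg}}$ must be pushed to relative error $o(1)$ — in particular the domination estimate $\#\{B:N(B)=\mathcal E\setminus T\}=(e^{-1/2}+o(1))2^{N/2-d|T|}$ must be made rigorous and uniform over $|T|\le\kappa$ — in order to pin down the constant $2\sqrt e$ rather than merely $\Theta(1)$.
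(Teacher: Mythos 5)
First, a point of comparison: the paper does not prove Theorem \ref{i} at all --- it is quoted from Korshunov--Sapozhenko \cite{KS}, with \cite{GS} as an exposition and \cite{JP} for refinements --- so there is no in-paper proof to measure you against; I am judging your sketch against the known argument. Your architecture (the exact identity $i(Q_d)=\sum_{B\subseteq\cO}2^{N/2-|N(B)|}$, the three ranges of $|N(B)|$, the Poisson computations producing $\sqrt e$ at each end, and closures/$2$-components/isoperimetry/containers in the middle) is precisely the Korshunov--Sapozhenko/Galvin route, and the two boundary computations are correct in outline; the matching \emph{upper} bound $(e^{-1/2}+o(1))$ on the domination probability, which you flag yourself, does need a second-moment or Bonferroni argument uniform over $|T|\le\kappa$, but that is standard.

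The genuine gap is in the middle range. You bound $\sum_{\mathcal{B}_{\mathrm{mid}}}2^{-|N(B)|}$ by $(\#\{\mbox{closed $2$-linked $F'$ with $|N(F')|=\psi$ through a fixed vertex}\})\times 2^{-\gO(\psi/\sqrt d)}$ and assert that the count is $2^{O(\psi\log d/d)}$ ``by Sapozhenko's graph container lemma.'' That is not what the lemma says, and the bound is false. Sapozhenko's lemma (Lemma \ref{lem:GS}, or \eqref{Sap.original}) bounds $|\cG(\aaa,g)|$ by $2^{g-\gO(t/\log d)}$ with $t=g-\aaa$; the number of \emph{closed} sets with parameters $(\aaa,g)$ is governed by quantities of order $2^{t(1-o(1))}$ (each closed set carries $\approx 2^{\aaa}$ preimages inside $\cG(\aaa,g)$), and for the dangerous sets --- those with $t=\Theta(\psi/\sqrt d)$, the isoperimetric minimum --- this is $2^{\Theta(\psi/\sqrt d)}$, which dwarfs $2^{O(\psi\log d/d)}$. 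With the correct count your product $(\mbox{count})\times 2^{-\gO(\psi/\sqrt d)}$ does not close: the isoperimetric saving and the entropy of closed sets are of the \emph{same} order $\psi/\sqrt d$, and it is exactly the $\gO(t/\log d)$ (or, in this paper, $\gO(t)$) surplus in the container lemma that breaks the tie. The standard repair is to not pass to closures as a separate lossy step but to weigh $|\cG(\aaa,g)|$ (all $A$, not just closed ones, so that the $2^{|[B]|}$ preimage factor is already absorbed) directly against $2^{-g}$ in $2^{N/2}\sum_g 2^{-g}\,\#\{B:|N(B)|=g\}$ --- exactly the accounting this paper carries out in Section \ref{sec:MT} for the balanced problem. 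Two smaller issues: Lemma \ref{lem:GS} requires $g\ge d^4$, so your threshold $\kappa=d^2$ leaves a band $d^2\le\psi<d^4$ that must be handled by direct counting of $2$-linked sets (Proposition \ref{prop:setcost}) together with Proposition \ref{prop:isop1}; and the restriction to $|[B]|\le N/4$ needed to invoke isoperimetry should be imposed globally (via $\min\{|[I\cap\cE]|,|[I\cap\cO]|\}\le N/4$, at the price of a factor $2$), not component by component.
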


\nin (The above asymptotics are substantially refined by Jenssen and Perkins in \cite{JP}.) Note that the rhs of \eqref{i asymp} is an asymptotic lower bound on $i(Q_d)$: writing $Q_d=\cE \coprod \cO$ (a few basic definitions are recalled below), any subset of $\cE$ or $\cO$ is an independent set, from which we have { $ 2\cdot2^{N/2}-1$} independent sets. The extra factor $\sqrt e$ reflects the contribution of independent sets most of whose vertices are even (odd, resp.), together with a (very) small number of odd (even, resp.) vertices. (See e.g. \cite{GS} for a more detailed description on this lower bound construction.)

Thus Theorem \ref{i} {implies} that $i(Q_d)$ is asymptotically equal to this lower bound, and in particular, this implies all but a negligible fraction of independent sets in $Q_d$ are highly unbalanced. The natural problem of estimating $\bis(Q_d)$ was suggested by T. Helmuth, M. Jenssen, and W. Perkins \cite{Q}, and Theorem~\ref{MT} answers this question at the level of asymptotics of the logarithm.

The key ingredient of the proof of Theorem \ref{MT} is Lemma \ref{main lemma}, an improvement (see Remark \ref{rmk:imp}) of ``Sapozhenko's graph container lemma'' from \cite{Sap87}. Sapozhenko's lemma and its variants have played a key role in resolving a number of asymptotic enumeration problems on the Hamming cube and related structures, e.g. \cite{KS, JP, G, KPq, misqn, JK, B}. The current improved version of the lemma is implicitly proved in \cite[Lemma~6.3]{misqn}, but we give a self-contained proof in Section \ref{sec:ml} to provide a convenient reference for future work.

\bigskip

\nin \textit{Definitions.} We use $Q_d$ for the $d$-dimensional Hamming cube: that is, $V=V(Q_d)$ is the collection of binary strings of length $d$, and two vertices are adjacent iff they differ in exactly one coordinate. A vertex $v$ is even (odd, resp.) if $v$ contains an even (odd, resp.) number of $1$'s. We use $\cE$ ($\cO$, resp.) for the set of even (odd, resp.) vertices in $Q_d$ (so $Q_d=\cE \coprod \cO$). The collection of balanced independent sets (BIS's) in $Q_d$ is denoted by {$\cB=\cB(d)$}, and $I$ always denotes a BIS.

As usual, $N(v)$ is the set of neighbors of $v$, and $N(A)$ is the set of vertices that are adjacent to at least one vertex in $A$. We use $[A]$ for the \textit{closure} of $A$, namely, $[A]=\{v \in V:N(v) \subseteq N(A)\}$.

Finally, we {refer to the logarithm of the number of possibilities for a choice as the \textit{cost} of that choice.}

\bigskip

\nin \textit{Outline.} In Section \ref{sec:tools} we recall some basic tools. The main lemma (Lemma \ref{main lemma}) and Theorem \ref{MT} are proved in Section \ref{sec:ml} and Section \ref{sec:MT} respectively.

\section{Tools}\label{sec:tools}

The following is a well-known fact about the sum of binomial coefficients.

\begin{prop}\label{binom.sum}
For any fixed $\alpha \in [0, 1/2]$ and $n \in \mathbb Z^+$,
\[\sum_{i \le \alpha n}{n \choose i} \le 2^{H(\alpha)n},\]
where $H(\alpha):=-\alpha \log \alpha -(1-\alpha)\log (1-\alpha)$ is the binary entropy {function}.
\end{prop}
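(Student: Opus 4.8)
The plan is to run the standard ``weighted binomial theorem'' argument. I would start from the identity
\[
1 = \big(\alpha + (1-\alpha)\big)^n = \sum_{i=0}^{n}\binom{n}{i}\alpha^i(1-\alpha)^{n-i},
\]
and observe that every term on the right is nonnegative, so in particular
\[
1 \ \ge\ \sum_{i \le \alpha n}\binom{n}{i}\alpha^i(1-\alpha)^{n-i}.
\]
The point is then to bound each summand with $i \le \alpha n$ from below by a single quantity independent of $i$, and factor that out.

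The key step is the elementary inequality $\alpha^i(1-\alpha)^{n-i} \ge \alpha^{\alpha n}(1-\alpha)^{(1-\alpha)n}$ for all integers $i \le \alpha n$. To see this, write $\alpha^i(1-\alpha)^{n-i} = (1-\alpha)^n\big(\tfrac{\alpha}{1-\alpha}\big)^i$; since $\alpha \in [0,1/2]$ we have $\tfrac{\alpha}{1-\alpha}\le 1$, so the function $i \mapsto \big(\tfrac{\alpha}{1-\alpha}\big)^i$ is nonincreasing in $i$, and hence is at least its value at $i=\alpha n$, namely $\big(\tfrac{\alpha}{1-\alpha}\big)^{\alpha n}$. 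This is precisely the place where the hypothesis $\alpha \le 1/2$ is used — it is the only real ``obstacle,'' and it is a mild one. Noting that $\alpha^{\alpha n}(1-\alpha)^{(1-\alpha)n} = 2^{-H(\alpha)n}$ by definition of $H$, we get
\[
1 \ \ge\ 2^{-H(\alpha)n}\sum_{i \le \alpha n}\binom{n}{i},
\]
which rearranges to the claimed bound. (If $\alpha = 0$ one should read the sum as $\binom{n}{0}=1=2^{H(0)n}$, consistent with the convention $0\log 0 = 0$; this degenerate case can be dispatched in one line, or simply absorbed by continuity.)

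I would also briefly mention the equivalent probabilistic phrasing, since it may read more cleanly: if $X \sim \mathrm{Bin}(n,\alpha)$ then $\pr(X \le \alpha n) \le 1$, while for each $i \le \alpha n$, $\pr(X = i) = \binom{n}{i}\alpha^i(1-\alpha)^{n-i} \ge \binom{n}{i}2^{-H(\alpha)n}$ by the monotonicity above; summing over $i \le \alpha n$ gives the result. Either presentation is a few lines and requires no machinery beyond the binomial theorem.
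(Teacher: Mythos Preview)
Your argument is correct and is the standard proof of this well-known bound. The paper itself does not supply a proof: Proposition~\ref{binom.sum} is stated as ``a well-known fact about the sum of binomial coefficients,'' so there is nothing to compare against.
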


For a positive integer $m$, a \textit{composition} of $m$ is a sequence $(a_1, \ldots, a_s)$ of positive integers summing to $m$. {Recall the following basic fact:}

\begin{prop}\label{prop:comp} 
The number of compositions of $m$ is $2^{m-1}$ and the number with at most $b \le m/2$ parts is
\[ \sum_{i\le b}{m-1 \choose i} {\le} \exp_2[b\log (em/b)].\]
\end{prop}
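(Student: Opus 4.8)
The plan is to get the exact count of compositions from a stars-and-bars bijection, then read off the count of compositions with few parts as a binomial tail sum, which I would bound by an elementary estimate rather than leaning on Proposition \ref{binom.sum}.

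First I would record the bijection. Picture $m$ ones written in a row; there are $m-1$ internal gaps, and choosing any subset $S$ of them and cutting there breaks the row into $|S|+1$ consecutive blocks, whose lengths form a composition of $m$; every composition arises exactly once. Equivalently, $(a_1,\dots,a_s)\mapsto\{a_1,\ a_1+a_2,\ \dots,\ a_1+\cdots+a_{s-1}\}$ is a bijection from the $s$-part compositions of $m$ to the $(s-1)$-subsets of $\{1,\dots,m-1\}$. Hence there are $\binom{m-1}{s-1}$ compositions of $m$ with exactly $s$ parts, and summing over $s\ge 1$ gives $\sum_{j=0}^{m-1}\binom{m-1}{j}=2^{m-1}$, the first assertion. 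The same tally shows the number of compositions of $m$ with at most $b$ parts is $\sum_{s=1}^{b}\binom{m-1}{s-1}=\sum_{j=0}^{b-1}\binom{m-1}{j}\le\sum_{j\le b}\binom{m-1}{j}$.

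To bound this last tail sum I would use that, for integers $1\le b\le n$,
\[\sum_{j\le b}\binom{n}{j}\le\Big(\frac{en}{b}\Big)^{b},\]
which follows from the binomial theorem: setting $t:=b/n\in(0,1]$ and using $t^{b}\le t^{j}$ for $j\le b$,
\[t^{b}\sum_{j\le b}\binom{n}{j}\ \le\ \sum_{j\le b}\binom{n}{j}t^{j}\ \le\ (1+t)^{n}\ \le\ e^{tn}=e^{b}.\]
Applying this with $n=m-1$ and then using $e(m-1)/b\le em/b$ yields $\sum_{j\le b}\binom{m-1}{j}\le(e(m-1)/b)^{b}\le(em/b)^{b}=\exp_2[b\log(em/b)]$, as claimed.

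There is no real obstacle here; the only thing to watch is to stay in the range where the tail estimate is valid, and the hypothesis $b\le m/2$ handles this comfortably since it forces $m\ge 2b$, hence $1\le b\le m-1$. (If one instead preferred to quote Proposition \ref{binom.sum} with $\alpha=b/(m-1)$ together with the standard bound $H(\alpha)\le\alpha\log(e/\alpha)$, one would need $\alpha\le 1/2$, i.e. $b\le(m-1)/2$, a hair stronger than $b\le m/2$; the elementary computation above sidesteps even that, so it is the route I would take.)
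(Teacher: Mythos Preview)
Your argument is correct. The stars-and-bars bijection gives exactly $\binom{m-1}{s-1}$ compositions with $s$ parts, whence the total $2^{m-1}$ and the tail count $\sum_{j<b}\binom{m-1}{j}\le\sum_{j\le b}\binom{m-1}{j}$; your elementary bound $\sum_{j\le b}\binom{n}{j}\le (en/b)^b$ via $t=b/n$ is clean and valid in the stated range, and your parenthetical comparison with Proposition~\ref{binom.sum} is accurate.

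As for comparison with the paper: there is nothing to compare. The paper states Proposition~\ref{prop:comp} as a ``basic fact'' to be recalled and gives no proof at all. Your write-up therefore supplies what the paper simply assumes. One minor remark: the exact count of compositions with at most $b$ parts is $\sum_{j\le b-1}\binom{m-1}{j}$, so the displayed expression $\sum_{i\le b}\binom{m-1}{i}$ in the proposition is already an upper bound rather than an equality; you implicitly handle this correctly by writing $\le$ at that step.
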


{S}ay $A \subseteq V$ is \textit{{$2$}-linked} if for any $u, v \in A$, there are vertices $u=u_0, u_1, \ldots, u_l=v$ in $A$ such that for each $i \in [l]$, $u_{i-1}$ and $u_i$ are at distance at most $2$ in $Q_d$. The \textit{$2$-components} of $A$ are its maximal $2$-linked subsets.

\begin{prop} [\cite{G}, Lemma 1.6] \label{prop:setcost} 
For each fixed $k$, the number of $k$-linked subsets of $V$ of size $x$ containing some specified vertex is at most $2^{O(x\log d)}$.
\end{prop}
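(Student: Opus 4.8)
The plan is to reduce the count to a standard estimate for the number of connected subgraphs of a bounded-degree graph. First I would pass to the auxiliary graph $Q_d^{(k)}$ on vertex set $V$ in which $u$ is adjacent to $v$ precisely when $u$ and $v$ are at distance at most $k$ in $Q_d$; by definition, a $k$-linked subset of $V$ is exactly a connected vertex set of $Q_d^{(k)}$. The maximum degree of $Q_d^{(k)}$ is $D:=\max_v|\{w : 1 \le \mathrm{dist}_{Q_d}(v,w) \le k\}| = \sum_{i=1}^{k}\binom{d}{i}$, and for fixed $k$ this is $\le k\binom{d}{k} \le d^{O(k)}$ once $d$ is large.

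Next I would use the well-known bound that, in any graph of maximum degree $D$, the number of connected vertex subsets of size $x$ containing a prescribed vertex $w$ is at most $(D+1)^{2x}$. One clean way to see this: fix once and for all a linear ordering of the neighborhood of each vertex of $Q_d^{(k)}$; given a connected set $S\ni w$ of size $x$, pick a spanning tree $T$ of the subgraph of $Q_d^{(k)}$ induced on $S$, rooted at $w$, and run a depth-first search on $T$. This produces a walk of length exactly $2(x-1)$, at each step of which we either descend to the next not-yet-visited child (one of at most $D$ options, named by the fixed ordering) or backtrack to the parent; the string of (type, option) pairs determines $S$, so there are at most $(D+1)^{2(x-1)} \le (D+1)^{2x}$ possibilities for $S$. (Different spanning trees may give the same $S$, but over-counting is harmless here.)

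Combining the two steps, the number of $k$-linked subsets of $V$ of size $x$ through a specified vertex is at most $(D+1)^{2x}$ with $D \le d^{O(k)}$, hence at most $2^{O(kx\log d)} = 2^{O(x\log d)}$ since $k$ is constant, which is the assertion.

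I do not expect a real obstacle here; the only points needing a little care are making the depth-first-search encoding unambiguous (this is what the fixed neighborhood orderings are for) and the entirely routine verification that $\sum_{i\le k}\binom{d}{i} = d^{O(k)}$ for fixed $k$. If one wanted a sharper statement one could instead quote the subtree-counting bound $(eD)^{x-1}$ in place of $(D+1)^{2x}$, but the crude version above already gives everything that is needed.
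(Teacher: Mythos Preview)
Your argument is correct. Note, however, that the paper does not give its own proof of this proposition; it is simply quoted from \cite{G}, Lemma~1.6. The approach there is the same as yours: pass to the distance-$k$ power graph, whose maximum degree is $d^{O(k)}$, and then invoke the standard bound (via a DFS/spanning-tree encoding, or equivalently the $(eD)^{x-1}$ subtree count you mention) on the number of connected vertex sets of a given size through a given vertex in a bounded-degree graph. So your proposal matches the intended proof.
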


The next two results recall standardish isoperimetric inequalities for $Q_d$. Recall that $N=|V(Q_d)|=2^d$.

\begin{prop}[\cite{GS}, Claim 2.5] \label{prop:isop}
For $A \subseteq \cE$ (or $\cO$) with $|A| \le N/4$,
\[\frac{|N(A)|-|A|}{|N(A)|}=\gO(1/\sqrt d).\]
\end{prop}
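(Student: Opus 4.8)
The plan is to turn this bipartite expansion statement into an ordinary vertex-isoperimetric inequality in $Q_{d-1}$, where Harper's theorem applies.

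\textit{Step 1 (reduction to $Q_{d-1}$).} I would identify $\cE$ with $\{0,1\}^{d-1}$ by deleting the last coordinate (which is determined by the others through the parity), and identify $\cO$ with $\{0,1\}^{d-1}$ the same way. If $v\in\cE$, $w\in\cO$ and their images $\bar v,\bar w\in\{0,1\}^{d-1}$ differ in exactly one coordinate, then $\bar v$ and $\bar w$ have weights of opposite parity, so the parity-completing last coordinates of $v,w$ coincide and $v\sim w$ in $Q_d$; if $\bar v=\bar w$ then the last coordinates differ and again $v\sim w$; and these are the only adjacencies. Thus the bipartite graph between $\cE$ and $\cO$ is the closed-neighbourhood bipartite graph of $Q_{d-1}$, so for $\bar A\subseteq\{0,1\}^{d-1}$ the image of $A$ we get $|N(A)|=|\bar A\cup N_{Q_{d-1}}(\bar A)|$ and hence
\[ |N(A)|-|A|\;=\;\bigl|\,N_{Q_{d-1}}(\bar A)\setminus\bar A\,\bigr|, \]
the vertex-boundary size of $\bar A$ in $Q_{d-1}$. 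Since $|\bar A|=|A|\le N/4=2^{d-2}=\tfrac12|V(Q_{d-1})|$ and $|N(A)|\le 2^{d-1}$, it is enough to show that every $S\subseteq Q_{d-1}$ with $|S|\le 2^{d-2}$ has $|N_{Q_{d-1}}(S)\setminus S|=\gO(|S|/\sqrt d)$.

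\textit{Step 2 (Harper and binomial estimates).} By Harper's vertex-isoperimetric inequality, the vertex boundary of a set of given size is minimised by an initial segment $C=B(0,r-1)\cup T$ of the simplicial order, where $T$ lies in the weight-$r$ layer; the bound $|C|\le 2^{d-2}$ forces $r\le(d-1)/2$ (up to rounding). For such $C$, $\partial C$ contains the unused part of layer $r$ together with the upper shadow $\partial^+T$ (which lies in layer $r+1$), so the ``local LYM'' inequality gives
\[ |\partial C|\;\ge\;\Bigl(\tbinom{d-1}{r}-|T|\Bigr)+|T|\cdot\tfrac{d-1-r}{r+1}\;\ge\;\tbinom{d-1}{r}\qquad\text{when }r<(d-1)/2. \]
Since also $|C|\le\sum_{i\le r}\binom{d-1}{i}$, the statement reduces to the standard fact $\sum_{i\le r}\binom ni=O\!\bigl(\sqrt n\,\binom nr\bigr)$ for $r\le n/2$ --- immediate by comparison with a geometric series when $r$ is bounded away from the centre, and by the Gaussian approximation of the binomial coefficients (cf.\ Proposition~\ref{binom.sum}) when it is not. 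This gives $|\partial C|/|C|=\gO(1/\sqrt d)$, which Step 1 pulls back to the proposition.

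The real content sits in the reduction of Step 1; Harper's theorem and the binomial bound are classical. The one spot demanding a moment's care is the regime where $r$ is within $O(1)$ of $(d-1)/2$: there the shadow factor $\tfrac{d-1-r}{r+1}$ degenerates to about $1$, and instead one notes that $|C|\le 2^{d-2}$ leaves at least half of the middle layer --- whose size is $\Theta(2^d/\sqrt d)$ --- outside $C$, hence inside $\partial C$.
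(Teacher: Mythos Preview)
The paper does not prove this proposition; it is quoted from Galvin's exposition \cite{GS} (Claim~2.5 there) and used as a black box. So there is no in-paper argument to compare against.

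Your proof is correct and is in fact the standard route: the projection in Step~1 identifying the $\cE$--$\cO$ bipartite graph with the closed-neighbourhood graph of $Q_{d-1}$ is exactly right, and it converts the statement into the vertex-isoperimetric inequality $|\partial S|=\Omega(|S|/\sqrt{d})$ for $|S|\le 2^{d-2}$ in $Q_{d-1}$, which is a well-known consequence of Harper's theorem together with the tail bound $\sum_{i\le r}\binom{n}{i}=O(\sqrt{n})\binom{n}{r}$ for $r\le n/2$. Your handling of the boundary case $r\approx (d-1)/2$ (where the shadow ratio degenerates) via the size of the middle layer is also correct. One cosmetic remark: once you have $|\partial S|\ge c|S|/\sqrt d$, the passage to $(|N(A)|-|A|)/|N(A)|=\Omega(1/\sqrt d)$ is immediate since either $|\partial S|\le |S|$ (whence $|N(A)|\le 2|S|$) or $|\partial S|>|S|$ (whence the ratio exceeds $1/2$); you assert ``it is enough'' without spelling this out, but it is routine. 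This is almost certainly the argument in \cite{GS} as well.
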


\begin{prop}[\cite{GS}, Lemma 2.6]\label{prop:isop1} 
For $A \subseteq \cE$ (or $\cO$),
\[\mbox{if } |A|<d^{O(1)}, \mbox{ then } |A| \le O(1/d)|N(A)|.\]
\end{prop}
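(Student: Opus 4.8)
The plan is to relate $|N(A)|$ to closed-neighborhood isoperimetry in $Q_d$ and then invoke Harper's vertex-isoperimetric inequality; a direct union bound will not suffice, for reasons I explain at the end. Identify $V(Q_d)$ with the subsets of $[d]$, write $|v|$ for the weight (number of $1$'s) of $v$, and set $N[A]:=A\cup N(A)$. Since every neighbor of an even vertex is odd, for $A\sub\cE$ the union $N[A]=A\sqcup N(A)$ is disjoint, so
\[
|N(A)|=|N[A]|-|A| ,
\]
and it suffices to show $|N[A]|-|A|\ge\gO(d)\,|A|$ whenever $m:=|A|\le d^{O(1)}$ (the case $m=1$ being trivial, so assume $m\ge2$). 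By Harper's inequality $|N[A]|\ge|N[B]|$, where $B$ is the initial segment of the simplicial order on $2^{[d]}$ with $|B|=m$; here $B$ need not lie in $\cE$, but I only use this inequality (valid for all sets) together with the displayed identity, giving $|N(A)|\ge|N[B]|-m$.

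Next I would estimate $|N[B]|-m$ by elementary binomial bookkeeping. Write $B_\rho:=\{v:|v|\le\rho\}$ for the Hamming ball of radius $\rho$, so $|B_\rho|\ge\binom{d}{\rho}\ge(d/\rho)^{\rho}$; since $m\le d^{O(1)}$, this exceeds $m$ once $\rho$ is a sufficiently large \emph{constant} (depending only on the exponent hidden in $d^{O(1)}$), so there is such a constant $r$ with $|B_{r-1}|<m\le|B_r|$. Then $B\supseteq B_{r-1}$, and $B\sm B_{r-1}$ is a family of exactly $s:=m-|B_{r-1}|\in[1,\binom{d}{r}]$ sets of size $r$. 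Since $N[B_{r-1}]=B_r$, and every $(r+1)$-set containing one of those $s$ sets lies in $N[B]$, we get $N[B]\supseteq B_r\sqcup\cW$, where $\cW$ denotes that family of $(r+1)$-sets (disjoint from $B_r$, since its members have weight $r+1$). A handshake count — the $s$ $r$-sets send $s(d-r)$ edges upward, and each $(r+1)$-set receives at most $r+1$ of them — gives $|\cW|\ge s(d-r)/(r+1)$, so, using $|B_r|-m=\binom{d}{r}-s$,
\[
|N[B]|-m \;\ge\; \Bigl(\binom{d}{r}-s\Bigr)+\frac{s(d-r)}{r+1}
\;=\;\binom{d}{r}+s\cdot\frac{d-2r-1}{r+1}.
\]
To finish, note $m=|B_{r-1}|+s$ with $|B_{r-1}|\le r\binom{d}{r-1}\le\tfrac{2r^2}{d}\binom{d}{r}$, hence also $m\le2\binom{d}{r}$; dividing the last display by $m$ and using that the result is monotone in $s$ (so it suffices to check the endpoints $s=1$ and $s=\binom{d}{r}$) gives $|N[B]|-m\ge\gO(d/r^{2})\cdot m=\gO(d)\,m$ since $r=O(1)$. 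Therefore $|N(A)|\ge|N[B]|-m\ge\gO(d)\,|A|$, i.e.\ $|A|\le O(1/d)\,|N(A)|$.

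I expect the only subtle point to be the appeal to Harper's inequality instead of inclusion--exclusion. The tempting estimate $|N(A)|\ge\sum_{v\in A}|N(v)|-\sum_{\{u,v\}\sub A}|N(u)\cap N(v)|=d\,|A|-2\cdot\#\{\text{pairs of }A\text{ at distance }2\text{ in }Q_d\}$ is vacuous here: for $A$ a Hamming ball of size $d^{c}$ with $c>1$, the number of distance-$2$ pairs inside $A$ is already $\gO(d\,|A|)$. So a genuine isoperimetric input — Harper's theorem, or an equivalent compression / Kruskal--Katona argument — really is needed; the rest is routine.
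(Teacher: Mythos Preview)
Your proof is correct. The paper itself does not prove this proposition --- it is simply quoted from Galvin's survey \cite{GS} --- so there is no in-paper argument to compare against. Your route via Harper's vertex-isoperimetric theorem (reduce to an initial segment of the simplicial order, observe that a segment of size $d^{O(1)}$ lies inside a Hamming ball of constant radius $r$, then do the shadow/handshake count to get $|N[B]|-m\ge \binom{d}{r}+s(d-2r-1)/(r+1)$) is a standard and natural way to obtain such polynomial-regime expansion bounds, and is in the same spirit as the cited source. The monotonicity step is fine since the ratio $\bigl(\binom{d}{r}+s(d-2r-1)/(r+1)\bigr)/(|B_{r-1}|+s)$ is fractional linear in $s$, and both endpoints $s=1$ and $s=\binom{d}{r}$ do give $\gO(d/r^{2})=\gO(d)$. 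Your closing remark on why the two-term inclusion--exclusion bound is inadequate is also correct and worth keeping.
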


{The next lemma recalls what we need from \cite{Sap87}, and follows from Lemmas 5.3-5.5 in the excellent exposition due to Galvin \cite{GS}.} For $A$ in the statement, we use $G=N(A)$ and $t=|G|-|[A]|$. 

\begin{lem} \label{lem:GS}
For $\aaa, g \in \mathbb Z^+$, $\aaa \le N/4$, $g \ge d^4$ and
\[\cG(\aaa,g)=\{A \subseteq \cE: \mbox{$A$ is 2-linked, $|[A]|=\aaa$ and $|G|=g$}\},\]
there {exist a family} $\cW=\cW(\aaa, g) \subseteq 2^{\cE} \times 2^{\cO}$ with
\beq{cW} |\cW|=2^{O(t\log^2d/\sqrt d)}\enq
and {a function} $\Phi=\Phi_{\aaa,g}:\cG \rightarrow \cW$ such that for each $A \in \cG$, $(S,F):=\Phi(A)$ satisfies:

\begin{enumerate}[(a)]
\item $S \supseteq [A], F \subseteq G$;
\item $|S| \le |F| + O(t/(\sqrt d \log d)).$
\end{enumerate}
\end{lem}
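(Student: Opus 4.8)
Here is how I would plan the proof of Lemma~\ref{lem:GS}, which is the container lemma in the form we need.

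\medskip

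\noindent\textbf{Plan.} The target is to approximate each $2$-linked $A\in\cG(\aaa,g)$ by a pair $(S,F)$ with $S\supseteq[A]$, $F\subseteq G=N(A)$, the sizes of $S$ and $F$ nearly equal, and the number of pairs that arise subexponential in $t/\sqrt d$. Since $t=|G|-|[A]|$ measures how far $A$ is from being ``filled in,'' the plan is to run the standard Sapozhenko container/expansion-and-contraction argument but keeping careful track of the $1/\sqrt d$ savings coming from vertex expansion in $Q_d$ (Proposition~\ref{prop:isop}), rather than just the cruder ``$\Omega(1)$-fraction'' that the unrefined lemma would use. Concretely, I would first recall (following Galvin's exposition, Lemmas~5.3--5.5 in \cite{GS}) the construction of an \emph{approximating pair} $(\psi(A),\varphi(A))$: one builds a small ``certificate'' $T\subseteq G$ (a dominating-type set for $A$, obtained by a greedy/probabilistic covering argument) of size $|T|=O(t\log d/d)$ or so, from which both $[A]$ and $G$ can be reconstructed up to a controlled error; the set of possible $T$'s has size $2^{O(|T|\log d)}$ by Proposition~\ref{prop:setcost}-type counting, giving the $2^{O(t\log^2 d/\sqrt d)}$ bound in \eqref{cW}.

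\medskip

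\noindent\textbf{Key steps, in order.} (1) \emph{Expansion input.} Observe that because $A\subseteq\cE$ and $|A|\le|[A]|=\aaa\le N/4$, Proposition~\ref{prop:isop} gives $|G|-|[A]|=\Omega(|G|/\sqrt d)$, i.e.\ $t=\Omega(g/\sqrt d)$, equivalently $g=O(t\sqrt d)$. This is the quantitative lever: every ``cost'' incurred in units of $g$ or $\aaa$ can be re-expressed in units of $t\sqrt d$. (2) \emph{Building the certificate.} Choose a minimal (or greedily/randomly chosen) set $T\subseteq G$ such that $[A]\subseteq N(T)$ restricted appropriately; a first-moment argument shows one can take $|T|=O((g\log d)/d)=O(t\log d/\sqrt d)$, using that each vertex of $G$ has $d$ neighbors and invoking Proposition~\ref{binom.sum}/\ref{prop:comp}-style bounds. (3) \emph{Reconstruction.} From $T$ set $F:=$ the ``predicted'' neighborhood and $S:=[N(T)]$ or a similar closure; verify $[A]\subseteq S$ and $F\subseteq G$, and bound $|S\setminus F|$ and $|F\setminus S|$ by the number of vertices not yet ``resolved'' by $T$, which is again $O(t/(\sqrt d\log d))$ after optimizing the size of $T$ against the resolution error — this is exactly the trade-off that forces the $\log d$ and $\sqrt d$ factors in (b) and in \eqref{cW}. (4) \emph{Counting.} The pair $(S,F)$ is determined by $T$ together with $O(t/(\sqrt d\log d))$ further bits of correction, so $|\cW|\le 2^{O(|T|\log d)}\cdot 2^{O(t/\sqrt d)}=2^{O(t\log^2 d/\sqrt d)}$, using Proposition~\ref{prop:setcost} to count the choices of $T$ as a $2$-linked (or bounded-diameter) subset of $\cE\cup\cO$ through a fixed vertex. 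Finally, take $\cW$ to be the union over all starting vertices and all admissible $T$'s of the resulting $(S,F)$, and $\Phi(A)$ the pair produced by the (deterministic rendering of the) above procedure applied to $A$.

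\medskip

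\noindent\textbf{Main obstacle.} The delicate point is step (3): balancing the size of the certificate $T$ against the reconstruction error so that \emph{both} \eqref{cW} and condition (b) hold simultaneously with the stated $\sqrt d$- and $\log d$-dependence. If $T$ is too small, $S$ and $F$ differ by too much (violating (b)); if $T$ is too large, $|\cW|$ blows up. The correct choice hinges on the Hamming-cube expansion estimate of Proposition~\ref{prop:isop} being applied not just to $A$ itself but to the ``uncovered'' residual sets at each stage, and on a careful covering argument (essentially the greedy set-cover bound $|T|\approx (g/d)\log(\text{something})$) — this is where the self-contained argument of Section~\ref{sec:ml} will have to do real work, and where the improvement over the classical version of Sapozhenko's lemma (Remark~\ref{rmk:imp}) actually lives.
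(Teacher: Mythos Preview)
The paper does not give a proof of Lemma~\ref{lem:GS}; it simply records it as a consequence of Lemmas~5.3--5.5 in Galvin's exposition \cite{GS} of Sapozhenko's argument. Your plan is essentially an outline of that same argument---build a small covering certificate $T\subseteq G$ of size $O((g\log d)/d)$, use it to produce an approximating pair $(S,F)$, convert $g$-costs to $t$-costs via the isoperimetric input $g=O(t\sqrt d)$ from Proposition~\ref{prop:isop}, and count certificates via Proposition~\ref{prop:setcost}---and, carried out along Galvin's lines, it yields exactly the stated bounds. (Some of your specifics are loose: $T$ need not itself be $2$-linked, and the actual construction of $(S,F)$ from $T$ is a two-stage ``$\varphi$/$\psi$-approximation'' rather than a single closure $[N(T)]$; but these are details the cited lemmas handle.)

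There is, however, a real confusion in your final paragraph. You place ``the improvement over the classical version of Sapozhenko's lemma (Remark~\ref{rmk:imp})'' inside step~(3) of the proof of Lemma~\ref{lem:GS}. This is backwards. Lemma~\ref{lem:GS} \emph{is} the classical container lemma, quoted as-is; the improvement referenced in Remark~\ref{rmk:imp} is Lemma~\ref{main lemma}, which takes Lemma~\ref{lem:GS} as a black box and then adds a new idea (fix a closed $A^*\in\Phi^{-1}(S,F)$ and specify $A$ via the small symmetric differences $G^*\setminus G$ and $G\setminus G^*$) to remove the $\log d$ from the exponent. So the ``delicate balancing'' you anticipate in step~(3) is standard and already done in \cite{GS}; the new content of the paper lives entirely in Section~\ref{sec:ml}, downstream of the lemma you are proving.
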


\section{Main Lemma}\label{sec:ml}

{In this section we prove the following key lemma.}

\begin{lem}\label{main lemma}

For $\aaa, g,$ and $ \cG(\aaa, g)$ as in Lemma \ref{lem:GS},
\[\log|\cG(\aaa,g)|\le g-\gO(t).\]
\end{lem}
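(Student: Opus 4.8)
The plan is to use the container structure from Lemma~\ref{lem:GS} to break the choice of $A \in \cG(\aaa,g)$ into a bounded sequence of sub-choices whose total cost is $g - \gO(t)$. First I would fix the pair $(S,F) = \Phi(A) \in \cW$; by \eqref{cW} this costs only $O(t\log^2 d/\sqrt d) = o(t)$, so it is negligible against the target savings of $\gO(t)$. Having fixed $(S,F)$, the task reduces to: how many $A \in \cG$ can map to a given $(S,F)$? By property~(a) we know $[A] \subseteq S$ and $G = N(A) \supseteq F$, and since $A \subseteq \cE$ with $N(A) = G$ we have (for $A$ not too large, i.e. $\aaa \le N/4$) that $A$ is essentially determined by a small ``correction'' to $F$.

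The main step is to reconstruct $G = N(A)$ from $F$ cheaply. We know $F \subseteq G \subseteq N(S)$ (the latter because $A \subseteq [A] \subseteq S$ forces $N(A) \subseteq N(S)$), so $G$ is obtained from $F$ by adding a subset of $N(S) \setminus F$. Here I would want to bound $|N(S) \setminus F|$; using property~(b), $|S| \le |F| + O(t/(\sqrt d\log d))$, and $|N(S)| \le |S|\cdot(\text{something})$... actually the cleaner route is: $|G \setminus F| = |G| - |F| = (g - |[A]|) + (|[A]| - |F|) + (|S| - |S|)$—let me instead write $|G| - |F| = t + (\aaa - |F|)$ and bound $\aaa - |F| \le \aaa - |F|$... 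The honest accounting is $|G \setminus F| \le |G| - |[A]| + |[A]| - |F| = t + (|S| - |F|) \le t + O(t/(\sqrt d \log d)) = O(t)$ once we also control $|S| - |[A]|$; if $\Phi$ is set up so that $S$ is not much bigger than $[A]$ this is immediate, otherwise one subsumes it into the container cost. So choosing $G \supseteq F$ inside $N(S)$ costs at most $O(t)$ bits — but with a small enough implied constant this is $o(t)$ only if we are careful, so really we want $|G\setminus F| = O(t/\log d)$ or similar; I would lean on the fact that a $2$-linked set and its closure have comparable neighborhoods. Once $G$ is pinned down, $A$ is recovered as a subset of the even vertices all of whose neighbors lie in $G$; the number of such $A$ is at most $2^{|[G]|}$ where $[G] = \{v \in \cE : N(v) \subseteq G\}$, and the whole point of isoperimetry (Proposition~\ref{prop:isop}) is that $|[G]| \le |[A]| \le (1 - \gO(1/\sqrt d))|G| = g - \gO(g/\sqrt d) = g - \gO(t)$ (using $t = \gO(g/\sqrt d)$ which should follow from $|[A]| \le N/4$ and the isoperimetric gap, since $t = |G| - |[A]| = \gO(|G|/\sqrt d)$).

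Putting the three costs together: $o(t)$ for the container $(S,F)$, a small cost for upgrading $F$ to $G$, and $|[G]| \le g - \gO(t)$ for choosing $A$ itself, gives $\log|\cG(\aaa,g)| \le g - \gO(t)$ as required. The subtle point I would watch most carefully is matching constants: the savings $\gO(t)$ in the dominant term ($|[G]| \le g - \gO(t)$, via isoperimetry) must strictly exceed the losses incurred in the auxiliary choices, which are all of order $t \cdot \mathrm{polylog}(d)/\sqrt d$ — so I need the isoperimetric gain $t = \gO(g/\sqrt d)$ to already be ``$\sqrt d$ worse'' than those, i.e. the real statement needed is that the reconstruction of $G$ from $F$ and of $A$ from $G$ together cost at most $|F| + o(t)$, not merely $|F| + O(t)$. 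Establishing $|G \setminus F| = o(t/1)$—equivalently that $\Phi$'s approximation is tight enough—is the crux, and it is exactly what property~(b) of Lemma~\ref{lem:GS} is designed to give once combined with the observation that $N([A]) = N(A) = G$ so no vertices of $G$ are ``lost'' in passing to the closure.
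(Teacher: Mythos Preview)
Your overall architecture is right: pay $o(t)$ for the container $(S,F)$, then recover $G$ from $F$, then choose $A$ inside $[A]=\{v\in\cE:N(v)\subseteq G\}$ at cost $\aaa=g-t$. The gap is entirely in the middle step, and it is a real one. You want the cost of passing from $F$ to $G$ to be $o(t)$, but the only ambient set you propose to search in is $N(S)\setminus F$, and this set is far too large. From (b) of Lemma~\ref{lem:GS} one gets $|S|-\aaa\le |F|+o(t)-\aaa\le (1+o(1))t$, so $S$ may exceed $[A]$ by as many as $(1+o(1))t$ vertices, each contributing up to $d$ new neighbours; hence $|N(S)\setminus F|$ can be of order $dt$. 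Meanwhile $|G\setminus F|=g-|F|$ is only bounded by $(1+o(1))t$ in general (and by $\gamma t$ only after you have already assumed $|S|\ge g-\gamma t$). Choosing a subset of size $\Theta(t)$ from a universe of size $\Theta(dt)$ costs $\Theta(t\log d)$, which wipes out the $t$ you saved in the final step. Your closing paragraph correctly identifies that you need this reconstruction to cost $o(t)$, but nothing in the proposal actually delivers it.

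The paper supplies two ideas you are missing. First, a case split: if $|S|<g-\gamma t$ for a small constant $\gamma$, one simply takes $A$ as a subset of $S$ and is done. Second, and this is the heart of the matter, in the complementary case one fixes \emph{for free} an arbitrary closed $A^*\in\Phi^{-1}(S,F)$ and compares $G$ to $G^*:=N(A^*)$ rather than to $F$ directly. Then $G^*\setminus G\subseteq G^*\setminus F$, which has size at most $(1+o(1))\gamma t$; and $G\setminus G^*$ is specified by a set $Y\subseteq S\setminus A^*$ of size at most $(1+o(1))\gamma t$, where the search space $S\setminus A^*$ has size only $(1+o(1))t$. Both of these costs are $O(H(\gamma)t)$ or $O(\gamma t)$, so for $\gamma$ small enough the total is below $t/2$, and the lemma follows. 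The reference set $A^*$ is exactly the device that shrinks the search space from $N(S)\setminus F$ (size $\approx dt$) down to $S\setminus A^*$ (size $\approx t$); without it the argument does not close.
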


\begin{remark}\label{rmk:imp}
For comparison, Sapozhenko's original graph container lemma says
\beq{Sap.original} \log|\cG(\aaa,g)|\le g-\gO(t/\log d),\enq
so the main contribution of Lemma \ref{main lemma} is to improve the $\gO(t/\log d)$-term in the rhs of \eqref{Sap.original}  to $\gO(t)$. This improvement plays a crucial role in the current work: the bound in \eqref{Sap.original} would give a weaker bound, $2^{g-\gO(N/(\sqrt d \log d))}$, in \eqref{main task}. 

\end{remark}

\begin{proof}[Proof of Lemma \ref{main lemma}]
Given $\aaa$ and $g$, Lemma \ref{lem:GS} gives $\cW=\cW(\aaa,g)$ at cost $O(t \log^2 d/\sqrt d)$. So it suffices to show that given $(S,F) \in \cW$, the cost of specifying $A \in \Phi^{-1}(S,F)$ is at most $g-\gO(t)$.

Let $\gamma \in (0,1)$ be a constant TBD. (We don't try to optimize $\gamma$.)

Case 1. If $|S| <g-\gamma t$, then we specify $A$ by picking a subset of $S$, which costs $|S|=g-\gamma t$.

Case 2. If $|S| \ge g-\gamma t$, then we first fix an arbitrary closed $A^* \in \Phi^{-1}(S,F)$ (we can simply pick \textit{any} member of $\Phi^{-1}(S,F)$ and take its closure). Note that this choice is free, and $|A^*|=\aaa$ by the definition of $\cG(\aaa,g)$.

The crucial observation is that ({letting} $G^*=N(A^*)$)
\[\mbox{$(G^* \setminus G, G \setminus G^*)$ determines $(G,[A])$.}\]
In what follows we first specify $G^* \setminus G$ and $G \setminus G^*$ from which we have $[A]$, and then specify $A \subseteq [A]$.

We first bound the cost of $G^* \setminus G$. Since $G^* \setminus G \subseteq G^* \setminus F$, the cost of $G^* \setminus G$ is at most (using Lemma~\ref{lem:GS}~(b)) 
\beq{l1} |G^* \setminus F|=|G^*| -|F| \le |G|-|S|+O(t/(\sqrt d \log d))\le (1+o(1))\gamma t.\enq

Next, we bound the cost of $G \setminus G^*$. Observe that
\[G \setminus G^* =N([A] \setminus A^*) \setminus G^*,\]
because each $x \in G \setminus G^*$ has a neighbor in $[A]$ and none in $A^*$. So we may specify $G \setminus G^*$ by specifying a $Y \subseteq [A] \setminus A^* \subseteq S \setminus A^*$ with $G \setminus G^*=N(Y) \setminus G^*$. Moreover, we only need $Y \sub S \setminus A^*$ of size at most $|G \setminus G^*| \le g-|F| \le (1+o(1))\gamma t$, by letting $Y$ contain one neighbor of $x$ for each $x \in G \setminus G^*$.

 Now, since (again using Lemma \ref{lem:GS}~(b)) 
\[|S \setminus A^*|=|S|-\aaa \le |F|+o(t)-\aaa \le g-\aaa +o(t) \le (1+o(1))t,\]
 the cost of specifying $Y$ from $S \setminus A^*$ is at most
\beq{l2}\log {(1+o(1))t \choose (1+o(1))\gamma t} \le (1+o(1))H(\gamma) t\enq
where $H(\cdot)$ is the binary entropy {function}.
Finally, once we have $[A]$, we specify $A$ by picking a subset of $[A]$, which costs
\beq{l3} \aaa=g-t. \enq
Summing up \eqref{l1}, \eqref{l2}, and \eqref{l3}, we bound the total cost for Case 2 by
\beq{Case2} (1+o(1))\gamma t + (1+o(1))H(\gamma) t+(g-t).\enq
Now, choose $\gamma$ so that \eqref{Case2} is less than (say) $g-t/2$, and the lemma follows.
\end{proof}

\section{Proof of Theorem \ref{MT}}\label{sec:MT}

We show that the rhs of \eqref{eq.MT} is an upper bound on $\log \bis(Q_d)$. We first dispose of the minor cost for small { independent sets.}

\begin{prop}\label{tiny} There is a constant $\alpha \in (0,1/2)$ such that
\beq{eq:tiny}|\{I \in {\cB}:|I|\le \alpha N\}| =2^{(1-\gO(1))N/2}.\enq

\end{prop}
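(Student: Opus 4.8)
The plan is to bound the number of small balanced independent sets by a simple union bound over the possible sizes, controlling each piece with the isoperimetry statements already in hand. Fix a constant $\alpha \in (0,1/2)$ to be chosen later, and let $I \in \cB$ with $|I| = 2k \le \alpha N$, so $|I \cap \cE| = |I \cap \cO| = k$. I would first handle the tiny range $k < d^{O(1)}$ separately using Proposition~\ref{prop:isop1}: there the even part $A := I \cap \cE$ satisfies $|A| \le O(1/d)|N(A)|$, and since $I \cap \cO$ is a subset of $\cO \setminus N(A)$, the cost of choosing $I$ is at most $\log\binom{N/2}{|A|} + (N/2 - |N(A)|) \le (N/2)(1 - \gO(1/d)) + o(N)$, which is already $(1-\gO(1))N/2$; summing over the at most $\mathrm{poly}(d)$ values of $k$ in this range is harmless.

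For the main range $d^{O(1)} \le k \le \alpha N/2$, I would again write $A = I \cap \cE$ and use Proposition~\ref{prop:isop}: since $|A| = k \le \alpha N/2 \le N/4$, we get $|N(A)| - |A| = \gO(|N(A)|/\sqrt d)$, i.e. $|N(A)| \ge |A| + \gO(|N(A)|/\sqrt d) \ge (1 + \gO(1/\sqrt d))|A|$. The odd part of $I$ lives in $\cO \setminus N(A)$, so once $A$ is chosen the number of ways to finish is at most $2^{N/2 - |N(A)|} \le 2^{N/2 - (1+c/\sqrt d)|A|}$ for some constant $c>0$. The number of choices of $A$ of size $k$ is $\binom{N/2}{k} \le 2^{H(2k/N)\cdot N/2}$ by Proposition~\ref{binom.sum} (taking $\alpha$ small enough that $2k/N \le \alpha < 1/2$). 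Hence the count of such $I$ is at most
\[
2^{H(2k/N)N/2 + N/2 - (1+c/\sqrt d)k}.
\]
Writing $x = 2k/N \in [0,\alpha]$, the exponent is $(N/2)\big(1 + H(x) - (1 + c/\sqrt d)x\big)$. The function $f(x) = H(x) - x$ satisfies $f(0)=0$ and $f'(0^+) = +\infty$, so it is positive and increasing near $0$; but we have the extra negative term $-(c/\sqrt d)x$, and the point is that $H(x) - x \le C x \log(1/x)$-type bounds let us absorb it only once $x$ is bounded away from $0$. Concretely, $H(x) - x = x\log(1/x) + (1-x)\log\frac{1}{1-x} - x = x\log(1/x) + O(x) - x$; this is $\le -cx/(2\sqrt d)$ once, say, $\log(1/x) \le 1 - c/(2\sqrt d)$, i.e. once $x \ge 2^{-1+o(1)}$, which is too large. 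So the honest statement is different: for $x$ bounded away from both $0$ and $1/2$, $1 + H(x) - x$ is a constant strictly less than... wait — at $x \to 0$, $1 + H(x) - x \to 1$, and at $x = 1/2$ it is $1 + 1 - 1/2 = 3/2 > 1$. So the naive bound does NOT beat $N/2$ pointwise.

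This is the main obstacle, and the resolution is that the bound above is wasteful: having chosen $A$, the odd vertices of $I$ must avoid $N(A)$, but one should not separately pay the full entropy for $A$ when $A$ is large — instead observe that $|A| + |\cO \setminus N(A)| = k + N/2 - |N(A)| \le N/2 - \gO(|N(A)|/\sqrt d) \le N/2 - \gO(k/\sqrt d)$, so the total number of vertices one is selecting (all of $I$, thought of as a subset of $\cE \cup (\cO \setminus N(A))$, a set of size $\le N/2 - \gO(k/\sqrt d)$ once we also note the even side only contributes from a set of size $N/2$)... The clean way: the cost of specifying $I$ is at most the cost of specifying the pair $(A, I\cap\cO)$ where $A \subseteq \cE$ arbitrary and $I \cap \cO \subseteq \cO\setminus N(A)$; bounding crudely by $\log\sum_{A}2^{N/2 - |N(A)|}$, and using $|N(A)| \ge |A|$ together with a more careful split — small $A$ via Proposition~\ref{prop:isop1}/\ref{prop:isop} giving genuine gain, and for $A$ with $|A|$ of order $N$ (which forces $|I| \ge \gO(N)$, hence $x$ bounded below) using that $|N(A)| \ge (1+\gO(1/\sqrt d))|A|$ makes $|A| + (N/2 - |N(A)|) \le N/2 - \gO(N/\sqrt d)$, so even summing $2^{N/2 - |N(A)|}$ over all $2^{|A|} \le 2^{N/2}$ choices... no. I would instead appeal to the structure used in the rest of the paper: decompose $A = I \cap \cE$ into $2$-components, apply the container machinery (Lemma~\ref{lem:GS} and Lemma~\ref{main lemma}) componentwise to bound the number of choices of $N(A)$ and hence of the feasible region for $I \cap \cO$, and observe the gain $\gO(t)$ per component sums to $\gO(N/\sqrt d)$ over the whole small range by isoperimetry — but since Proposition~\ref{tiny} only claims the weak bound $(1-\gO(1))N/2$ (not $(1-\gO(1/\sqrt d))N/2$), the honest and simplest route is: choose $\alpha$ small enough that $H(\alpha) < 1/4$ (possible since $H(0^+)=0$); then for all $k \le \alpha N/2$ the count $2^{H(2k/N)N/2 + N/2 - |N(A)|} \le 2^{H(\alpha)N/2 + N/2 - k}$, and since we also trivially have $|N(A)| \ge |A| = k$, if moreover $k \ge \beta N$ for a small constant $\beta$ we're done by the entropy term being $<(1/4)N/2$ while losing $\ge \beta N$, and if $k < \beta N$ we use the isoperimetric gain. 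The main work, and where I expect to spend the most care, is choosing the two cutoffs and $\alpha$ consistently so that in every regime the exponent is at most $(1 - \gO(1))N/2$; the geometry (Propositions~\ref{prop:isop} and \ref{prop:isop1}) does all the real lifting, and summing over the $O(N)$ values of $k$ costs only an additive $O(\log N)$.
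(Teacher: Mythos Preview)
Your argument has a genuine gap, and the fix is far simpler than anything you attempt. Throughout, you bound the number of choices for $I\cap\cO$ by $2^{N/2-|N(A)|}$, i.e.\ by \emph{all} subsets of $\cO\setminus N(A)$. But this discards the constraint $|I\cap\cO|=k\le\alpha N/2$, and for small $k$ it is fatally weak: when $k=1$, say, your bound on the cost is $\log\binom{N/2}{1}+(N/2-|N(A)|)=\log(N/2)+N/2-d$, which is $N/2-o(N)$, not $(1-\Omega(1))N/2$. (You write that the tiny-range bound is ``$(N/2)(1-\Omega(1/d))+o(N)$, which is already $(1-\Omega(1))N/2$''---but $\Omega(1/d)$ is not $\Omega(1)$.) The same issue recurs in your main range: the exponent $H(2k/N)N/2 + N/2 - |N(A)|$ exceeds $N/2$ whenever $H(2k/N)N/2 > |N(A)|$, and no amount of isoperimetry on $A$ alone can repair this, because $|N(A)|\to 0$ as $k\to 0$ while the target $(1-\Omega(1))N/2$ demands a gain of order $N$.

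The paper's proof is a one-liner that uses the size constraint on \emph{both} sides and nothing else: a BIS $I$ with $|I|\le\alpha N$ has $|I\cap\cE|,|I\cap\cO|\le\alpha N/2$, so
\[
|\{I\in\cB:|I|\le\alpha N\}|\le\left[\sum_{k\le\alpha N/2}\binom{N/2}{k}\right]^2\le 2^{H(\alpha)N}
\]
by Proposition~\ref{binom.sum}, and one simply chooses $\alpha$ small enough that $H(\alpha)<1/2$. No independence, no isoperimetry, no containers---Proposition~\ref{tiny} is meant to be the trivial step that clears the deck for the real argument in \eqref{main task}.
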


\begin{proof}
The lhs of \eqref{eq:tiny} is at most (with plenty of room)
\[\left[ \sum_{0 \le k \le \alpha N/2} {N/2 \choose k} \right]^2 \le 2^{H(\alpha)N}\]
(the inequality uses Proposition \ref{binom.sum}), and the rhs is less than $2^{(1-\gO(1))N/2}$ for small enough constant $\alpha$.\end{proof}

Let {$\cB'=\{I \in \cB: |I|>\alpha N\}$} where $\alpha$ is the constant in Proposition \ref{tiny}. A natural way to specify {a balanced independent set} $I$ is to choose a set $A \sub \cE$ and a set $B \sub \cO \setminus N(A)$ so that $|A|=|B|$ (and take $I=A \cup B$). Moreover, since $[I \cap \cE]$ and $[I \cap \cO]$ have no edges between them, $A$ and $B$ must satisfy $\min\{|[A]|, |[B]|\} \le N/4$ (because $|N(X)|\ge|X|, \; \forall X \sub \cE \mbox{ or } \cO$). Thus, {$|\cB'|$} is at most
\[\begin{split} &2\times \sum_{g >\alpha N/2} \sum_{\substack{A \sub \cE: |N(A)|=g\\ |A| \ge \alpha N/2 \\ |[A]| \le N/4}} |\{B \sub \cO \setminus N(A)\}|\\ &= 2^{N/2+1} \sum_{g >\alpha N/2} 2^{-g} |\{A \sub \cE: |N(A)|=g, |A| \ge \alpha N/2, |[A]| \le N/4\}|.\end{split} \]

Our main task is to show that
\beq{main task}\mbox{given $g =\gO(N)$, $|\{A \sub \cE: |N(A)|=g, |A| \ge \alpha N/2, |[A]|\le N/4\}| \le 2^{g-\gO(N/\sqrt d)}$,}\enq
from which it follows that (with Proposition \ref{tiny})
\[{|\cB|} \le 2^{(1-\gO(1))N/2}+2^{N/2+1}\sum_{g>\alpha N/2} 2^{-\gO(N/\sqrt d)}=2^{(1-\gO(1/\sqrt d))N/2}.\]
In the rest of the paper, we show \eqref{main task}. In what follows, we always assume that $g$ and $A$ satisfy the restrictions in \eqref{main task}.

\bigskip

\nin \textit{Notation.}

Recall that a $2$-component of $A$ is a maximal $2$-linked subset of $A$ (see Section \ref{sec:tools}).

\begin{itemize}
\item $A_i$'s: 2-components of $A$.

\item $G_i=N(A_i)$, $G=\cup_i G_i=N(A)$.

\item $g_i=|G_i|$, $a_i=|A_i|$, $\aaa_i=|[A_i]|$, $t_i=g_i-\aaa_i$.

\item $c(A)=\sum_i \aaa_i$ (note that $|A| \le c(A) \le |[A]|$).

\item $g=|G|~(=\sum_i g_i)$.

\item $A_i \mbox{ (or simply $i$) is }
\begin{cases} \mbox{\textit{ isolated } if $a_i=1$ (equiv. $g_i=d$);} \\ \mbox{\textit{ small } if $A_i$ is not isolated and $g_i<d^4$;} \\
\mbox{\textit{ large } otherwise}. \end{cases}$
\end{itemize}
\nin Note that the classification in the above bullet point is entirely determined by $g_i$.

By Proposition \ref{prop:isop1},
\[\sum\{\mbox{$a_i$: $i$ isolated or small}\}=O(N/d),\]
so in particular, we have (since $|A|=\gO(N)$)
\beq{i large}\sum\{\mbox{$g_i$: $i$ large}\}>\sum\{\mbox{$a_i$: $i$ large}\}=\gO(N).\enq

\begin{proof}[Proof of \eqref{main task}]
Observe that (since $|A| \le c(A) \le |[A]|$) it suffices to show that given $g$ as in \eqref{main task} and $\aaa$ with $\alpha N/2 \le \aaa \le \min\{N/4, g\}$, the number of $A$'s in $\cE$ with $c(A)=\aaa$ and $|N(A)|=g$ is at most $2^{g-\gO(N/\sqrt d)}$ ({since t}hen summing this up over all $\aaa$'s gives \eqref{main task}).

Given $\aaa$ and $g$, we first decompose $(\aaa,g)$ into $\{(\aaa_i,g_i)'s\}$ so that $\sum_i \aaa_i=\aaa$ and $\sum_i g_i=g$ (and then specify $A_i$'s satisfying $|[A_i]|=\aaa_i$ and $|G_i|=g_i$). The number of elements in a decomposition $\{(\aaa_i,g_i)'s\}$ is at most $g/d$, so Proposition \ref{prop:comp} bounds the cost of the $g_i$'s by $(g/d) \log (ed)$ and that of the $\aaa_i$'s by
\[
\left\{\begin{array}{ll}
(g/d)\log (ed)&\mbox{if $(g>)~ \aaa > 2g/d$;}\\
2g/d&\mbox{if $\aaa \le 2g/d$.}
\end{array}\right.
\]
Therefore, the total cost of the specification of $\aaa_i$'s and $g_i$'s is at most
\beq{pf:decomp}
O(g\log d/d).
\enq

\begin{lem}\label{small A}
Given $(\aaa_i,g_i)$, if $i$ is isolated or small, then the cost of $A_i$ with $|[A_i]|=\aaa_i$ and $|G_i|=g_i$ is at most $g_i$.
\end{lem}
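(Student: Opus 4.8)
The plan is to bound the number of possible $A_i$ crudely in each of the two cases.

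If $i$ is isolated, then $a_i = 1$, so $A_i = \{v\}$ for a single even vertex $v$; there are $|\cE| = N/2$ such choices, so the cost of $A_i$ is $\log(N/2) = d-1 < d = g_i$, since $g_i = d$ for isolated $i$. This case is immediate.

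If $i$ is small, then $A_i$ is $2$-linked with $a_i \ge 2$ and $g_i < d^4$. First I would observe that $a_i \ge 2$ forces $g_i$ to be essentially of order $d$: since $A_i \subseteq \cE$ is $2$-linked it contains two distinct vertices $u, v$ at Hamming distance exactly $2$, whence $g_i = |N(A_i)| \ge |N(u) \cup N(v)| = 2d - 2$, i.e.\ $d \le g_i/2 + 1$. Next, since $a_i \le g_i < d^4$, Proposition~\ref{prop:isop1} gives $a_i = O(g_i/d)$. Now I specify $A_i$ by first choosing one of its (at most $N$) vertices, at cost $\log N = d$, and then --- by Proposition~\ref{prop:setcost} applied with $k = 2$ --- choosing the remaining $2$-linked set of size $a_i$ through that vertex, at cost $O(a_i \log d) = O(g_i \log d / d)$. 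Summing these, the cost of $A_i$ is at most
\[ d + O\!\left(g_i \log d / d\right) \;\le\; g_i/2 + 1 + O\!\left(g_i \log d / d\right) \;\le\; g_i, \]
where the final inequality uses $\log d / d = o(1)$ together with $g_i \ge 2d - 2 \to \infty$ to absorb the lower-order terms, valid for $d$ large.

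The only real obstacle is verifying that this ``choose a base vertex, then extend'' count, namely $2^{d + O(a_i \log d)}$, is indeed at most $2^{g_i}$; this works precisely because a non-isolated small component has $g_i$ of order at least $2d$, which leaves a factor-of-$2$ of slack to swallow both the $2^d$ base-vertex term and the $2^{O(g_i \log d / d)}$ extension term. Everything else is routine bookkeeping.
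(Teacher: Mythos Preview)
Your proof is correct and follows essentially the same approach as the paper's: both dispatch the isolated case trivially, and for small $i$ both combine the $2$-linked enumeration bound of Proposition~\ref{prop:setcost} with the isoperimetric estimate of Proposition~\ref{prop:isop1} and the observation $g_i \ge 2(d-1)$. The only difference is cosmetic---the paper first specifies $[A_i]$ (whose size $\aaa_i$ is given) and then picks $A_i$ as a subset of it, whereas you specify $A_i$ directly; your route is slightly leaner, though strictly speaking it requires an (negligible) extra $O(\log d)$ to sum over the unknown value of $a_i$.
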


\begin{proof}
The cost of an isolated $i$ is at most
\[\log(N/2)=d-1\le g_i.\]

For a small $i$, we use Proposition \ref{prop:setcost} to bound the cost of $[A_i]$ by 
\[\log(N/2)+O(\aaa_i \log d)\]
($\log(N/2)$ is the cost for the "specified vertex" in Proposition \ref{prop:setcost}). Once we have $[A_i]$ we specify $A_i$ by choosing each subset of $[A_i]$, which costs $\aaa_i$. Therefore, the total cost for small $i$'s is
\[\log(N/2)+O(\aaa_i\log d)+\aaa_i \le g_i,\]
 where the inequality follows from the fact that $g_i /2 \ge d-1$ and Proposition \ref{prop:isop1}. \end{proof}

Finally, given $(\aaa_i,g_i)$ {such that} $i$ is large, Lemma~\ref{main lemma} bounds the cost for $A_i$ with $|[A_i]|=\aaa_i$ and $|G_i|=g_i$ by
\beq{large A}g_i-\gO(t_i)\enq
(here we need the assumption that $(\aaa_i \le )~ \aaa \le N/4$ to apply Lemma \ref{main lemma}).

Summing up the costs in \eqref{pf:decomp}, Lemma \ref{small A} and \eqref{large A}, we have the cost for $A$ at most
\beq{total} O(g\log d/d)+g-\sum\{\mbox{$\gO(t_i)$: $i$ large}\}.\enq
Now Proposition \ref{prop:isop} gives $t_i=\gO(g_i/\sqrt d)$ for all $i$, so by \eqref{i large} we bound \eqref{total} by
\beq{total1}g-\gO(N/\sqrt d).\enq
\end{proof}

\nin \textbf{Acknowledgment.} The author is grateful to Matthew Jenssen for insightful conversations and helpful comments on the first draft of this paper.

%%%%%%%%%%%%%%%%%%%%%%%%%%%%%%%%%%%%%


\begin{thebibliography}{AA}

\bibitem{B}
J. Balogh, R. Garcia and L. Li,
Independent sets in the middle two layers of Boolean lattice,
\textit{J. Combin. Theory Ser. A} 178 (2021), 24 pp.

\bibitem{Barber}
B. Barber,
A note on balanced independent sets in the cube,
\textit{Australas. J. Combin.} 52 (2012), 205-207.

\bibitem{G}
D. Galvin,
On homomorphisms from the Hamming cube to Z,
\textit{Israel J. Math.} 138 (2003), 189-213.


\bibitem{GS}
D. Galvin,
Independent sets in the discrete hypercube,
arXiv:1901.01991 [math.CO]

\bibitem{Q}
M. Jenssen, personal comunication


\bibitem{JK}
M. Jenssen and P. Keevash,
Homomorphisms from the torus,
arXiv:2009.08315 [math.CO]

\bibitem{JP}
M. Jenssen and W. Perkins,
Independent sets in the hypercube revisited,
\textit{ J. Lond. Math. Soc.} (2) 102 (2020), no. 2, 645-669.



\bibitem{KPq}
J. Kahn and J. Park,
The number of 4-colorings of the Hamming cube,
\textit{Israel J. Math.} 236 (2020), no. 2, 629-649.

\bibitem{misqn}
J. Kahn and J. Park,
The number of maximal independent sets in the Hamming cube,
arXiv:1909.04283 [math.CO]

\bibitem{KS}
A.D. Korshunov and A.A. Sapozhenko,
The number of binary codes with distance 2. (Russian)
\textit{Problemy Kibernet.} No. 40 (1983), 111-130.


\bibitem{Sap87}
A.A. Sapozhenko, 
On the number of connected subsets with given cardinality of the boundary in bipartite graphs. (Russian)
\textit{Metody Diskret. Analiz.} No. 45 (1987), 42-70.


\end{thebibliography}
\end{document}